\documentclass[reqno]{amsart}
\usepackage{amssymb,amsmath,amsthm,amscd,latexsym,amsfonts}
\usepackage{xy}
\xyoption{all}
\usepackage{mathtools}
\usepackage[T1]{fontenc}
\usepackage{graphicx}
\usepackage{dsfont}
\usepackage{amsaddr}
\usepackage{cite}
\usepackage{bbm}%using the bbm fonts in math environment
\usepackage{xcolor}
\usepackage{enumitem}

\usepackage[a4paper,top=3cm, bottom=3cm, left=3cm, right=3cm]{geometry}

\newtheorem{teo}{Theorem}

\newtheorem{rem}{Remark}
%%%%%%%%%%%%%%%%%%%%%%%%%%%%%%%%%%%%%%%%%%%%%%%%%%%%%%%%%%%%%%%%%%%%%%%%%%%%%%%%%%%

\begin{document}
    \title[Leibniz algebras constructed by Witt algebras]
    {Leibniz algebras constructed by Witt algebras}

    \author{L.M.~Camacho\textsuperscript{1}, B.A.~Omirov\textsuperscript{2}, T.K.~Kurbanbaev\textsuperscript{3}}
    \address{\textsuperscript{1} Dpto. Matem\'{a}tica Aplicada I.
        Universidad de Sevilla. Avda. Reina Mercedes, 41012 Sevilla, Spain, lcamacho@us.es}
    \address{\textsuperscript{2} National University of Uzbekistan, 100174, Tashkent, Uzbekistan, omirovb@mail.ru}
    \address{\textsuperscript{3} Institute of Mathematics of Uzbek Academy of Sciences, 100125, Tashkent, Uzbekistan, tuuelbay@mail.ru}

    \thanks{The work was partially supported  was supported by Agencia Estatal de Investigaci\'on (Spain), grant MTM2016-
        79661-P (European FEDER support included, UE). The third named author was supported by VI PPIT-US and by Instituto de Matem\'{a}ticas de la Universidad de Sevilla. }

    \begin{abstract}
We describe infinite-dimensional Leibniz algebras whose associated Lie algebra is the Witt algebra and we prove the triviality of low-dimensional Leibniz cohomology groups of the Witt algebra with the coefficients in itself.
 \end{abstract}

\subjclass[2010]{17A32, 17B30, 17B10}
\keywords{Leibniz algebra, Witt Lie algebra, Leibniz representation, right Lie module, classification}

\maketitle

\section{Introduction}

Mathematical structures are important mostly in mathematics and its applications. Leibniz algebras were introduced and developed by J-L. Loday \cite{Loday}. They are a generalization of Lie algebras, removing the restriction that the product is anti-commutative or that the square of an element is zero. Many results of Lie algebra theory are extended to the case of Leibniz algebras, while "pure" Leibniz results (which are true for non Lie Leibiz algebras) are obtained as well \cite{Albeverio, Ayupov2, Ayupov3, simple, Omirov3, Ayupov1, BarnesEngel, Barnes, Bosko, diamond, Omirov1, OmirovCartan, Omirov2}.

For Lie algebras it is known that an arbitrary finite-dimensional Lie algebra over a field of characteristic zero decomposes into the semidirect sum of the maximal solvable ideal and its semisimple subalgebra (Levi's Theorem, \cite{Jacob}).
Similar result is also true for Leibiz algebras, namely, a finite-dimensional Leibniz algebra decomposes into the semidirect sum of a maximal solvable ideal and a semisimple Lie subalgebra (Levi's Theorem, \cite{Barnes}). Complete description of semisimple finite-dimensional Lie algebras over a field of characteristic zero is known \cite{Hump, Jacob}. Therefore, the study of finite-dimensional Leibniz algebras is reduced to the study of solvable ones.

Infinite-dimensional case is more complicated even in Lie algebra structures. The most simple infinite-dimensional structure are Witt and Virasoro algebras. Complex Witt algebra was first considered by E. Cartan \cite{cartan} in 1909. This algebra is an example of an infinite-dimensional simple Lie algebra. Recall that Virasoro algebra was introduced due to Witt algebra. In fact, the Virasoro algebra is the one-dimensional central extension of the Witt algebra. In the work \cite{KacV} the infinite-dimensional Lie algebras and their representations are studied. We also mention that the investigations of representations of Witt algebra are found in \cite{WittRepr}, \cite{Zhao2}, \cite{Zhao1}.

A non-Lie Leibniz algebra $L$ contains the non-trivial ideal, generated by the squares of elements of the algebra $L$ (denoted by $\mathcal{I}$ and usually called the {\it Leibniz kernel}), i.e., $\mathcal{I}= \langle [x,x] \mid x \in L\rangle $. The ideal $\mathcal{I}$ plays an important role in the theory of Leibniz algebras since it determines the (possible) non-Lie property of a Leibniz algebra. Moreover, this ideal belongs to the right annihilator of $L$ and it is the minimal ideal with the property that the quotient algebra $L/\mathcal{I}$ is a Lie algebra.

The usual notion of simplicity for non Lie Leibniz algebras has no sense (because of non-triviality of the ideal $\mathcal{I}$). Therefore, it is proposed to use the adapted version of simplicity for Leibniz algebras. Namely, Leibniz algebra $L$ is called \emph{simple}, if $[L,L] \neq \mathcal{I}$ and its only ideals are \{0\}, $\mathcal{I}$ and $L$. Clearly, if a Leibniz algebra is simple, then its corresponding Lie algebra is simple as well. However, the converse is not true, in general. The analogue of Levi's theorem for Leibniz algebras imply that any finite-dimensional simple Leibniz algebra decomposes into the semidirect sum of simple Lie algebra and the ideal $\mathcal{I}$, where $\mathcal{I}$ can be considered as an irreducible right module over simple Lie algebra. The approach in the investigation of Leibniz algebras with a given corresponding solvable Lie algebra was applied in \cite{Omirov1, Omirov2, diamond}.

In finite-dimensional case we know that for a given simple Lie algebra and its irreducible right module one can construct a unique simple Leibniz algebra such that its corresponding Lie algebra is the simple Lie algebra and the ideal $\mathcal{I}$ is the given right module. In this paper we describe the Leibniz algebras whose corresponding Lie algebra is the Witt algebra and the ideal $\mathcal{I}$ is its right module (see Theorems \ref{thm11} and \ref{thm22}). Moreover, we prove the triviality of low-dimensional cohomology groups of these Leibniz algebras (see Theorem \ref{thm33}).

Throughout the paper algebras are considered to be over the field of the complex numbers. Moreover, in the table of multiplications of algebras the omitted products are assumed to be zero.

\section{Prelimiaries}
In this section we give preliminary definitions and results on Leibniz algebras and modules over the Witt algebra.

An algebra $L$ with multiplication $[\cdot,\cdot]$ over a field $F$ is called {\it Leibniz algebra} if for any $x,y,z\in L$ the so-called Leibniz identity
    $$[x,[y,z]]=[[x,y],z]-[[x,z],y]$$
holds.

Further we will use the notation
\[ {\mathcal L}(x, y, z)=[x,[y,z]] - [[x,y],z] + [[x,z],y].\]
It is obvious that Leibniz algebras are determined by the identity
${\mathcal L}(x, y, z)=0$.

For a Leibniz algebra $L$ we consider the natural homomorphism onto the quotient Lie algebra $L/\mathcal{I},$ which is called the \textit{corresponding Lie algebra} to the Leibniz algebra $L$ (in some references this algebra is called the \textit{liezation} of $L$).

Now we present a construction of Leibniz algebras by a given Lie algebra and its right module.

Let $(G, [-,-])$ be a Lie algebra and let $V$ be a right $G$-module. We equip the vector space $Q(G,V)=G\oplus V$ with the multiplication $(-,-)$ in the following way:
\begin{equation}\label{eq1}
(x+v,y+w) \coloneqq [x,y] + v \star y,  \quad \quad  x, y \in G, \ v,w \in \mathcal{V}.
\end{equation}

Then $Q(G,V)$ is a Leibniz algebra and from (\ref{eq1}) we get $(G,V) =(V,V) = 0$.

Note that if $V\star G=V$, then $V$ is nothing else but the ideal $\mathcal{I}$ of the Leibniz algebra $Q(G,V).$

The map $\mathcal{I}\times L/\mathcal{I} \longrightarrow \mathcal{I}$ defined as  $(v,\overline{x})\longrightarrow [v,x],$ $v\in \mathcal{I},$ $x\in L$ endows $\mathcal{I}$ with a structure of a right $L/\mathcal{I}$-module (it is well-defined due to $\mathcal{I}$ being in the right annihilator). Thus, for a given Leibniz algebra $L$ we have a Lie algebra $L/\mathcal{I}$ and its right module $\mathcal{I}$.

The main goal of this paper is to describe Leibniz algebras such that its corresponding Lie algebra is a given Lie algebra $G$ and the ideal $\mathcal{I}$ is a given right $G$-module $V$.

Let $A=\mathbb{C}[z,z^{-1}]$ be the algebra of Laurent polynomials in one variable. The Lie algebra of derivations $$Der(A)=span\{f(z)\frac{d}{dz}:f\in\mathbb{C}[z,z^{-1}]\}$$ is called \emph{Witt algebra} and denoted by $\mathcal{W}$. Obviously, the basis of $\mathcal{W}$ can be chosen as $\{d_j \ | \ j\in\mathbb{Z}\},$ where $d_j=-z^{j+1}\frac{d}{dz}.$

Then the table of multiplications of $\mathcal{W}$ in this basis have the following form:
\begin{equation}\label{equation1}
[d_m,d_n]=(m-n)d_{m+n}, \quad m,n\in \mathbb{Z}.
\end{equation}

%The Virasoro Algebra is spanned by $\{d_m:m\in\mathbb{Z}\}\cup\{c\}$ such that $c$
%is central, i.e. $[c,d_m]=0$ for all $m\in\mathbb{Z},$ and
%$$[d_m,d_n]=(m-n)d_{m+n}+\delta_{m,-n}\frac{1}{12}(m^3-m)c,$$
%where $\delta_{m,-n}$ is the Kronecker symbol, i.e.
%$$\delta_{i, -j}=\left\{\begin{array}{ll}
%1, \, \mbox{if} \, i=-j,\\[1mm]
%0, \, \mbox{if} \, i\neq-j.\\
%\end{array}\right.$$

%Let
%$$V=V(\alpha,\beta)=P(z)z^{\alpha}(dz)^{\beta}\cong \mathbb{C}[z,z^{-1}]=span\{v(n):n\in\mathbb{Z}\},$$
%where $\alpha, \beta\in\mathbb{C}$ and $P\in\mathbb{C}[z,z^{-1}].$ Define a representation of the
%Virasoro algebra by the following action:
%$$c\longmapsto0 \,\, \mbox{and} \,\, d_n v(k)=-(k+\alpha+\beta(n+1))v(n+k).$$
%This is the representation one would discover by using the definition $d_n:=-z^{n+1}\frac{d}{dz}$ on the
%elements $v(k):=z^{k+\alpha}(dz)^\beta.$

On a vector space $V(\alpha, \beta)=\{v(n) \ | \ n\in \mathbb{Z}\}$ we define $\mathcal{W}$-module structure \cite{WittRepr} as follows:
\begin{equation}\label{eq2}
v(n)\star d_m=(\alpha+n+\beta m)v(n+m), \quad n\in \mathbb{Z}, \alpha, \beta\in \mathbb{C}, \ \alpha \neq0.
\end{equation}
This bracket would be discovered by the action $d_m=-z^{m+1}\frac{d}{dz}$ on the elements $v(n)=z^{n+\alpha}(dz)^\beta.$

Below we present a result of \cite{KacV}.

\begin{teo} The representation $V(\alpha,\beta)$ is reducible if $(i)$ $\alpha\in\mathbb{Z}$ and
$\beta=0,$ or $(ii)$ $\alpha\in\mathbb{Z}$ and $\beta=1;$ otherwise it is irreducible.
\end{teo}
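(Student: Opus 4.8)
The plan is to determine all submodules of $V(\alpha,\beta)$ explicitly and then read off the reducibility criterion. The starting observation is that $d_0$ acts diagonally: by (\ref{eq2}) we have $v(n)\star d_0=(\alpha+n)v(n)$, so each $v(n)$ is an eigenvector, and the eigenvalues $\alpha+n$ are pairwise distinct as $n$ ranges over $\mathbb{Z}$. Since any submodule $U$ is invariant under the action of $d_0$, hence under every polynomial in that operator, a Lagrange-interpolation (Vandermonde) argument shows that $U$ is spanned by weight vectors: writing any $u\in U$ as a finite sum $\sum_{n\in F}c_n v(n)$ with all $c_n\neq 0$ forces each $v(n)$, $n\in F$, to lie in $U$. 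Consequently every submodule has the form $U_S=\mathrm{span}\{v(n)\mid n\in S\}$ for some $S\subseteq\mathbb{Z}$, and $V(\alpha,\beta)$ is reducible precisely when some proper nonempty $S$ yields a submodule.

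Next I would translate the submodule condition into a combinatorial statement on $S$. From (\ref{eq2}), $U_S$ is a submodule if and only if for every $n\in S$ and every $m\in\mathbb{Z}$ one has $(\alpha+n+\beta m)\neq 0\Rightarrow n+m\in S$; equivalently, whenever $n\in S$ and $k\notin S$, the coefficient $\alpha+(1-\beta)n+\beta k$ must vanish. This single identity is the engine of the whole argument, and the division into cases by the value of $\beta$ falls out of it.

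Then I carry out the case analysis. For $\beta\notin\{0,1\}$: fixing any $k_0\notin S$, the vanishing condition $\alpha+(1-\beta)n+\beta k_0=0$ is a nontrivial linear equation in $n$ (coefficient $1-\beta\neq 0$), so at most one $n$ can lie in $S$; and a one-point set $S=\{n_0\}$ is not a submodule since $\alpha+n_0+\beta m$ cannot vanish for all $m\neq 0$ when $\beta\neq 0$. Hence no proper nonzero submodule exists and $V(\alpha,\beta)$ is irreducible. For $\beta=0$: the coefficient reduces to $\alpha+n$, independent of $m$, so any $n\in S$ with $\alpha+n\neq 0$ forces $\mathbb{Z}\subseteq S$; a proper submodule thus requires $\alpha+n_0=0$, possible exactly when $\alpha\in\mathbb{Z}$, giving the submodule $U_{\{-\alpha\}}$, which is case (i). For $\beta=1$: the coefficient is $\alpha+k$, depending only on $k$, so $S$ must contain every $k$ with $\alpha+k\neq 0$; a proper submodule is possible exactly when $-\alpha\in\mathbb{Z}$, i.e.\ $\alpha\in\mathbb{Z}$, in which case $S=\mathbb{Z}\setminus\{-\alpha\}$ is readily checked to be a codimension-one submodule, which is case (ii). Thus reducibility occurs exactly in the two listed cases and never otherwise.

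The one genuinely delicate step is the first reduction, namely proving that every submodule is homogeneous, a span of the weight vectors $v(n)$. Everything afterward is elementary bookkeeping, but this step requires justifying that invariance under the single operator $d_0$, together with the distinctness of the eigenvalues $\alpha+n$, forces a submodule to be a coordinate subspace in the basis $\{v(n)\}$. I would prove it by applying interpolating polynomials in $d_0$ to an arbitrary element of $U$ in order to isolate each homogeneous component; the distinctness of the eigenvalues (which uses only $n\in\mathbb{Z}$) is exactly what makes these projections available. With this in hand, the combinatorial dichotomy above reproduces the statement.
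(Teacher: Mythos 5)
Your proof is correct, and it is worth noting at the outset that the paper itself contains no proof of this statement: it is quoted from Kac--Raina \cite{KacV}, so there is no internal argument to compare against. What you supply is a complete, self-contained and elementary derivation. The three steps all check out. First, the reduction to homogeneous submodules is sound: $v(n)\star d_0=(\alpha+n)v(n)$, the eigenvalues $\alpha+n$ are pairwise distinct for $n\in\mathbb{Z}$ regardless of $\alpha$, and applying Lagrange interpolation polynomials in the operator $u\mapsto u\star d_0$ to a finitely supported $u\in U$ isolates each component, so every submodule is $U_S=\mathrm{span}\{v(n)\mid n\in S\}$. Second, your ``engine'' identity is the right translation: with $k=n+m$, the submodule condition for $U_S$ is exactly that $n\in S$, $k\notin S$ forces $\alpha+(1-\beta)n+\beta k=0$. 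Third, the case analysis is airtight: for $\beta\notin\{0,1\}$ the coefficient $1-\beta\neq 0$ pins $S$ down to a singleton $\{n_0\}$, which fails to be a submodule because $\alpha+(1-\beta)n_0+\beta k=0$ cannot hold for infinitely many $k$ when $\beta\neq 0$; for $\beta=0$ the condition degenerates to $\alpha+n=0$, giving the one-dimensional trivial submodule $\mathrm{span}\{v(-\alpha)\}$ exactly when $\alpha\in\mathbb{Z}$; for $\beta=1$ it degenerates to $\alpha+k=0$, giving the codimension-one submodule $\mathrm{span}\{v(n)\mid n\neq-\alpha\}$ exactly when $\alpha\in\mathbb{Z}$ (and one checks directly that $v(n)\star d_m$ has coefficient $\alpha+n+m=0$ whenever $n+m=-\alpha$, so $v(-\alpha)$ is never hit). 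These two submodules are precisely the classical ones, the constants in $\mathbb{C}[z,z^{-1}]$ and the exact differentials in $\mathbb{C}[z,z^{-1}]\,dz$, so your combinatorial answer matches the known picture. A minor remark: the paper's equation (\ref{eq2}) carries the side condition $\alpha\neq 0$, which your argument never needs; this is harmless, and your proof in fact covers the statement in its full generality.
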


Let $L$ be Leibniz algebra such that its corresponding Lie algebra is $\mathcal{W}$ and the right $\mathcal{W}$-module $V(\alpha, \beta)$ is the ideal $\mathcal{I}$. We equip the direct sum of the vector spaces $L=\mathcal{W}\oplus \mathcal{I}$ with a product $[-,-]$ satisfying the conditions:
\begin{equation}\label{eq3}
[V(\alpha, \beta),\mathcal{W}]=V(\alpha, \beta)\star\mathcal{W}, \quad [\mathcal{W},V(\alpha, \beta)]=[V(\alpha, \beta),V(\alpha, \beta)]=0,
\end{equation}
where the product $V(\alpha, \beta)\star\mathcal{W}$ follows from (\ref{eq2}).

Thus, in order to completely describe the Leibniz algebra $L$ we have to clarify the product $[\mathcal{W},\mathcal{W}]$.

\

For getting acquainted with cohomology of Leibniz algebras we refer the reader to works \cite{Loday}, \cite{Loday2} and references therein. Here we just give the definition of the second group of cohomology for Leibniz algebras with coefficient in itself. In fact, the \emph{second cohomology group} of a Leibniz algebra $L$ with coefficient itself is the quotient space
$$HL^2(L,L):=ZL^2(L,L)/BL^2(L,L),$$
where elements $f\in BL^2(L,L)$ and $\varphi \in ZL^2(L,L)$ are defined by conditions:
$$f(x,y)=[d(x),y] + [x,d(y)] - d([x,y]), \,\, \mbox{for some linear map} \,\, d\in Hom(L,L),$$
\begin{equation}\label{eq4}
[x,\varphi(y,z)] - [\varphi(x,y), z] +[\varphi(x,z), y] +\varphi(x,[y,z]) - \varphi([x,y],z)+\varphi([x,z],y)=0,
\end{equation}
respectively.

It is obvious that a Leibniz $2$-cocycle $\varphi$ of a Leibniz algebra $L$ is  determined by the identity
$\Phi(\varphi)(x, y, z)=0, \ x,y,z\in L$ where
$$\Phi(\varphi)(x, y, z)=[x,\varphi(y,z)] - [\varphi(x,y), z] +[\varphi(x,z), y] +\varphi(x,[y,z]) - \varphi([x,y],z)+\varphi([x,z],y).$$

\section{Some Leibniz algebras with corresponding Lie algebra $\mathcal{W}$.}

\

In this section we describe Leibniz algebras whose corresponding Lie algebra is the Witt algebra and the ideal $\mathcal{I}$ is $V(\alpha, \beta)$ which satisfy (\ref{eq3}). First, let us consider the case $\alpha\notin \mathbb{Z}$.

\begin{teo} \label{thm11} Let $L$ be an arbitrary Leibniz algebra whose corresponding Lie algebra is the Witt algebra $\mathcal{W}$ and the ideal $\mathcal{I}$
of $L$ is considered as $V(\alpha, \beta)$ which satisfy the condition (\ref{eq3}). If $\alpha\notin\mathbb{Z},$ then there exists a basis  $\{d_i, v(i) \ | \ i\in \mathbb{Z}\}$ of $L$ such that its table of multiplications have the form:
    $$\left\{\begin{array}{l}
        [v(k),d_i]=(k+\alpha+\beta i)v(i+k),\\[1mm]
        [d_i,d_j]=(i-j)d_{i+j}.
        \end{array}\right.$$
    \end{teo}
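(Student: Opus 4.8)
The plan is to realize $L$ as an extension of $\mathcal{W}$ by the module $V(\alpha,\beta)$ and to annihilate the extension cocycle by a diagonalization argument driven by the single element $d_0$. Since $L/\mathcal{I}\cong\mathcal{W}$ and $\mathcal{I}=V(\alpha,\beta)$, conditions (\ref{eq3}) and (\ref{eq2}) already fix every product except $[\mathcal{W},\mathcal{W}]$, and the canonical projection $L\to\mathcal{W}$ forces
$$[d_m,d_n]=(m-n)d_{m+n}+\phi(m,n),\qquad \phi(m,n)\in V(\alpha,\beta).$$
Thus the entire problem reduces to choosing a better family of lifts of $\mathcal{W}$ so that every correction term $\phi(m,n)$ disappears.

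First I would record the standard fact that in any Leibniz algebra the right multiplication $R_a\colon x\mapsto[x,a]$ is a derivation; this follows at once by rewriting the Leibniz identity as $[[x,y],a]=[x,[y,a]]+[[x,a],y]$. Applying this to $a=d_0$ produces a derivation $R:=R_{d_0}$ of $L$. By (\ref{eq2}) it acts diagonally on $\mathcal{I}$, since $R\,v(k)=v(k)\star d_0=(\alpha+k)v(k)$, so $R|_{\mathcal{I}}$ is diagonalizable with spectrum $\{\alpha+k : k\in\mathbb{Z}\}$. On the lifts one only has $R\,d_m=[d_m,d_0]=m\,d_m+\phi(m,0)$, so $d_m$ is an $R$-eigenvector merely modulo $\mathcal{I}$. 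Here is where the hypothesis enters: when $\alpha\notin\mathbb{Z}$ the integer $m$ never lies in the spectrum $\{\alpha+k\}$, hence $mI-R$ is invertible on $\mathcal{I}$ (it is diagonal with nonzero entries $m-\alpha-k$). I can therefore set $w_m:=(mI-R)^{-1}\phi(m,0)\in\mathcal{I}$ and $\tilde d_m:=d_m+w_m$; a direct check gives $R\,\tilde d_m=m\,\tilde d_m$, so each $\tilde d_m$ is a genuine eigenvector of eigenvalue $m$, while $\{\tilde d_m, v(k)\}$ remains a basis of $L$ because the $\tilde d_m$ reduce to $d_m$ modulo $\mathcal{I}$.

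With this basis the computation collapses. As $R$ is a derivation and $\tilde d_m,\tilde d_n$ are eigenvectors, $R[\tilde d_m,\tilde d_n]=(m+n)[\tilde d_m,\tilde d_n]$, so the bracket lies in the $(m+n)$-eigenspace of $R$. Writing $[\tilde d_m,\tilde d_n]=(m-n)\tilde d_{m+n}+u$ with $u\in\mathcal{I}$ (the $\mathcal{W}$-parts agree because the product projects to the Witt bracket), the element $u=[\tilde d_m,\tilde d_n]-(m-n)\tilde d_{m+n}$ is a difference of $(m+n)$-eigenvectors, hence satisfies $R\,u=(m+n)u$. But $u\in\mathcal{I}$, where the eigenvalues are only $\alpha+k$; expanding $u=\sum_k c_k v(k)$ yields $c_k(\alpha+k-m-n)=0$ for every $k$, and $\alpha\notin\mathbb{Z}$ forces $c_k=0$, so $u=0$ and $[\tilde d_m,\tilde d_n]=(m-n)\tilde d_{m+n}$. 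The module action is unchanged, since $[v(k),\tilde d_m]=[v(k),d_m]+[v(k),w_m]=v(k)\star d_m=(\alpha+k+\beta m)v(m+k)$, the correction dying because $[\mathcal{I},\mathcal{I}]=0$. Renaming $\tilde d_m$ back to $d_m$ gives precisely the asserted table.

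I expect the main obstacle to be the bookkeeping around the derivation $R_{d_0}$, and in particular pinning down where $\alpha\notin\mathbb{Z}$ is genuinely used: it is exactly the invertibility of $mI-R$ on $\mathcal{I}$ and the emptiness of the intersection of the integer spectrum on the lifts with the spectrum $\{\alpha+k\}$ on $\mathcal{I}$. Everything else is routine linear algebra, but some care is needed because $\mathcal{I}$ is infinite-dimensional, so diagonalizability and invertibility must be read componentwise on the finitely supported basis expansions rather than as statements about operators on a completed space.
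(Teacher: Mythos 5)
Your proposal is correct, and every step checks out: $R_{d_0}$ is indeed a derivation of any right Leibniz algebra (this is exactly the Leibniz identity with last argument $d_0$), the operator $mI-R$ is invertible on $\mathcal{I}$ componentwise since $\alpha\notin\mathbb{Z}$ gives $m-\alpha-k\neq 0$, and the finitely-supported expansions make the eigenvalue comparison legitimate despite $\dim\mathcal{I}=\infty$. Your first step is in fact identical to the paper's: the paper's substitution $d^\prime_s=d_s+\sum_{k}\frac{\gamma_{s,0,k}}{s-\alpha-k}v(k)$ is precisely your $\tilde d_m=d_m+(mI-R)^{-1}\phi(m,0)$ written in coordinates. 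Where you genuinely diverge is the second step: the paper kills the remaining corrections by evaluating the Leibniz identity on a handful of specific triples (${\mathcal L}(d_0,d_j,d_0)$, ${\mathcal L}(d_j,d_j,d_0)$, ${\mathcal L}(d_j,d_0,d_{-j})$, $\dots$), with separate case splits for $[d_0,d_j]$, $[d_j,d_j]$, $[d_j,d_{-j}]$ and $[d_i,d_j]$, whereas your single observation that $[\tilde d_m,\tilde d_n]$ lies in the $(m+n)$-eigenspace of the derivation $R$, while $\mathcal{I}$ carries only the eigenvalues $\alpha+k\notin\mathbb{Z}$, disposes of all these cases uniformly. Your version is cleaner and pinpoints exactly where the hypothesis $\alpha\notin\mathbb{Z}$ enters (disjointness of the integer spectrum on the lifts from the spectrum $\{\alpha+k\}$ on $\mathcal{I}$); the paper's coefficient-level computation, by contrast, is the method that survives into the harder case $\alpha\in\mathbb{Z}$ of Theorem \ref{thm22}, where your eigenvalue argument degenerates because $m+n$ can then equal $\alpha+k$ and nontrivial cocycle terms genuinely appear.
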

\begin{proof} Let $\{d_i \ | \ i\in \mathbb{Z}\}$ be the basis of $\mathcal{W}$ and let $\{v(i) \ | \ i\in \mathbb{Z}\}$ be the basis of the space $V(\alpha, \beta)$.

    Let us introduce notations
    $$\begin{array}{ll}
    [v(k),d_i]=(k+\alpha+\beta i)v(i+k),&k+\alpha+\beta i\notin\mathbb{Z}\\[1mm]
    [d_i,d_j]=(i-j)d_{i+j}+\sum\limits_{k}\gamma_{ijk}v(k), & i\notin \{j,-j\},\\[2mm]
    [d_j,d_{-j}]=2jd_0+\sum\limits_k\delta_{jk}v(k), & j\neq0,\\[2mm]
    [d_j,d_j]=\sum\limits_k\sigma_{jk}v(k),&
    \end{array}$$
    with $\alpha+j+\beta j\notin\mathbb{Z},$ $j\in\mathbb{Z}$  and $\alpha,\beta\in\mathbb{C}$.

    Taking into account $\alpha\notin\mathbb{Z}$ by taking the change of basis elements $d_i$ in the following way:
    \[d^\prime_0=d_0-\sum\limits_k\frac{\sigma_{0,k}}{\alpha+k}v(k), \quad
    d^\prime_s=d_s+\sum\limits_{k}\frac{\gamma_{s,0,k}}{s-\alpha-k}v(k), \ s\in \mathbb{Z}^*,\]
one can assume that
    \[ [d_0,d_0]=0, \, [d_j,d_0]=jd_j, \quad  j\in\mathbb{Z}.\]

    By considering Leibniz identity for triples of elements given below we have the following constraints:
    \[\begin{array}{lllll}
    \text{ Leibniz identity }& & \text{ Constraints } &\\[1mm]
    \hline \hline\\
    {\mathcal L}(d_0,d_j,d_0)=0&\Longrightarrow &\gamma_{0,j,k}=0,\ \alpha+k \neq j&\Longrightarrow[d_0,d_j]=-jd_j,& j\in\mathbb{Z},\\
    {\mathcal L}(d_j,d_j,d_0)=0&\Longrightarrow & \sigma_{j,k}=0,\,\, k\neq 2j-\alpha&\Longrightarrow[d_j,d_j]=0,&\\
    {\mathcal L}(d_j,d_0,d_{-j})=0&\Longrightarrow & \delta_{j,k}=0,\, k\neq -\alpha&\Longrightarrow[d_j,d_{-j}]=2jd_0,&\\
    {\mathcal L}(d_j,d_0,d_{-j})=0&\Longrightarrow & \gamma_{ijk}=0, \,\, k\neq i+j-\alpha&\Longrightarrow[d_i,d_j]=(i-j)d_{i+j}.&
    \end{array}\]

     Thus, we have proved $[d_i,d_j]=(i-j)d_{i+j}$, that is, $[\mathcal{W},\mathcal{W}]=\mathcal{W}$, which completes the description of the structure of $L$.
\end{proof}

Now we consider the case $\alpha\in \mathbb{Z}$.

\begin{teo} \label{thm22} Let $L$  be an arbitrary Leibniz algebra with corresponding Lie algebra $\mathcal{W}$ and the ideal $\mathcal{I}$ of $L$ is considered
as $V(\alpha, \beta)$ which satisfies the conditions (\ref{eq3}).
If $\alpha\in \mathbb{Z},$ then there exists a basis $\{d_i, v(i) \ | \ i\in \mathbb{Z}\}$ of
$L$ such that the table of multiplications of $L$ have one of the following form:
$$
\begin{array}{lll}
(I):\left\{\begin{array}{ll}
    [v(k),d_i]=(k+\alpha+\beta i)v(i+k),&  \beta\notin\{-1,0,1,2,3\},\\[1mm]
    [d_i,d_j]=(i-j)d_{i+j},&
    %\\[1mm]
\end{array}\right.& \\ [5mm]
(II):\left\{\begin{array}{ll}
    [v(k),d_i]=(k+\alpha+3i)v(i+k),&\\[1mm]
    [d_j,d_j]=b_{j,j}v(2j-\alpha), & j\notin\{-1,
    0,1\}, \\[1mm]
    [d_j,d_{-j}]=2jd_0-\frac{1}{3}b_{j,j}v(-\alpha), & j\notin\{-1,0,1\}, \\[1mm]
    [d_i,d_j]=(i-j)d_{i+j}+b_{i,j}v(i+j-1),&\ \ i\notin \{j, -j\},\\[1mm]
\end{array}\right.& \\ [5mm]
where & \\ [2mm]
\left\{\small\begin{array}{ll}
    b_{2,2}=9, \, b_{-2,-2}=-9,& \\[2mm]
    b_{i,i}=\displaystyle\frac{(i+1)(2i+1)b_{i-1,i-1}-(i+1)(4i-3)a_{i-1}-(4i-1)(i-2)a_{-i}}{(i-2)(2i-3)},\, i\notin\{-2,-1,0,1,2\},\\ [2mm]
    b_{i,i+1}=\displaystyle\frac{(i+1)((2i+3)b_{i,i}+(4i+1)a_i)}{(i-1)(2i+1)}, \quad \quad \quad \quad \quad \quad \quad \quad \quad \quad \quad \quad \quad i\notin\{-1,0,1\},& \\[2mm]
    b_{i,j}=\displaystyle\frac{j((i-1)b_{i+1,j-1}+(3i+j)(j-1)a_{j-1}-(j-i-1)(j+i-1)a_{j+i-1}-i(i+3j-2)a_i)}{i(j-2)}, \\[2mm]
    \qquad \quad \quad \quad \quad \quad \quad \quad \quad \quad \quad \quad \quad \quad \quad \quad \quad \quad \quad \quad \quad \quad   j\neq i+1,\ i\notin \{j,-j\}, \, i,j \notin\{-1,0,1\},\\[1mm]
    a_i=\displaystyle\frac{(i-1)(i+2)(i+3)}{20}, \, a_{-i}=-\displaystyle\frac{(i+1)(i-2)(i-3)}{20}, \quad \quad \quad \quad \quad \quad \quad  i\notin\{-1,0,1\}, &\\ [1mm]
\end{array}\right. & \\ [15mm]
(III): \left\{\begin{array}{ll}
    [v(k),d_i]=(k+\alpha+i)v(i+k),&\\[1mm]
    [d_0,d_i]=-id_i,&\ \ i\neq 0,\\[1mm]
    [d_i,d_0]=id_i,&\ \ i\neq 0,\\[1mm]
    [d_i,d_i]=(i^3-i) v(2i-\alpha),& \ \ i\neq 0,\\[1mm]
    [d_i,d_{-i}]=2id_0+(i^3-i) v(-\alpha),&\ \ i\neq 0,\\[1mm]
    [d_i,d_j]=(i-j)d_{i+j}+j(ij-1) v(i+j-\alpha),&\ \ i\notin \{j,-j,0\}, \ \ j\neq 0.\\[1mm]
\end{array}\right. & \\ [15mm]
(IV):\left\{\begin{array}{ll}
    [v(k),d_i]=(\alpha+k-i)v(k+i),\\[1mm]
    [d_j,d_j]=b_{j,j}v(2j-\alpha),& \ \ j\neq 0,\\[1mm]
    [d_j,d_{-j}]=2jd_0-\frac{1}{3}b_{j,j}v(-\alpha), &\ \ j\neq 0,\\[1mm]
    [d_i,d_j]=(i-j)d_{i+j}+b_{i,j}v(i+j-\alpha),&\ \ i\notin \{j,-j\},\\[1mm]
\end{array}\right.& \\ [5mm]
where & \\ [2mm]
\left \{ \begin{array}{ll}
    b_{2,2}=2, \, b_{-2,-2}=-2,& \\[1mm]
    b_{i,i}=\frac{1}{(2i+1)(i-2)}((i+1)(2i-3)b_{i-1,i-1}-(2i-1)), & i\notin\{-2,-1,0,1,2\},\\ [1mm]
    b_{i,i+1}=\frac{i+1}{(2i+1)(i-1)}((2i-1)b_{i,i}-1),& \\ [1mm]
    b_{i,j}=\frac{j}{i(j-2)}((i-1)b_{i+1,j-1}+(j-i-1)), \, &i\notin \{j,-j,\},  \, i,j \notin\{-1,0,1\},\ j\neq i+1.
    \end{array}\right.
\end{array}
$$
\end{teo}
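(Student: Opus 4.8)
The plan is to mimic the strategy of Theorem~\ref{thm11}: fix the bases $\{d_i\}$ of $\mathcal{W}$ and $\{v(k)\}$ of $V(\alpha,\beta)$, record the most general admissible product as
\[
[d_i,d_j]=(i-j)d_{i+j}+\sum_{k}\gamma_{ijk}\,v(k),
\]
(the conditions (\ref{eq3}) already fix every other product), and determine the scalars $\gamma_{ijk}$ from the Leibniz identity. Since $\alpha\in\mathbb{Z}$, I would first note that $V(\alpha,\beta)\cong V(\alpha+1,\beta)$ as $\mathcal{W}$-modules (relabel $v'(n)=v(n+1)$ in (\ref{eq2})), so $\alpha$ may be shifted freely; this is why the homogeneous correction found below can be placed on $v(i+j-\alpha)$ (degree $i+j$) after relabelling. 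As in Theorem~\ref{thm11}, the opening move is to absorb as much of the $\mathcal{I}$-part as possible into the $d_i$ by a change of basis; the new feature is that the denominators $\alpha+k$ and $s-\alpha-k$ used there can now vanish, and the resonant terms that cannot be cleared are exactly what survive to produce the non-split families.

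The second step is to establish homogeneity. Introducing the $\mathbb{Z}$-grading $\deg d_m=m$, $\deg v(n)=n+\alpha$, the module action (\ref{eq2}) and the constraints (\ref{eq3}) are graded; evaluating the Leibniz identities with one entry equal to $d_0$ (as in the $d_0$-rows of Theorem~\ref{thm11}) then forces $\gamma_{ijk}=0$ for every $k\neq i+j-\alpha$. Writing the surviving coefficient as $b_{i,j}:=\gamma_{i,j,\,i+j-\alpha}$, the whole algebra is encoded in the array $(b_{i,j})$, and the general identity $\mathcal{L}(d_i,d_j,d_l)=0$ collapses, after the (automatically vanishing) $d_{i+j+l}$-coefficient cancels, to the single scalar relation
\[
(j-l)b_{i,j+l}-(i-j)b_{i+j,l}+(i-l)b_{i+l,j}-(i+j+\beta l)b_{i,j}+(i+l+\beta j)b_{i,l}=0.
\]
This is a Leibniz $2$-cocycle equation on $\mathcal{W}$ with values in $V(\alpha,\beta)$, and classifying its solutions as a function of $\beta$ is the entire content of the theorem.

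The third step is the case analysis in $\beta$. Specializing the scalar relation at small triples supplies the constraints: putting $l=0$ gives $(i-j)b_{i+j,0}=(i+\beta j)b_{i,0}$, and together with the change of basis and a few further $d_0$-identities this forces the $d_0$-rows to be clean, $b_{i,0}=b_{0,j}=0$ (in particular $[d_0,d_0]=0$), while $i=0$ yields the symmetry $j\,b_{j,l}=l\,b_{l,j}$. Choosing $l=\pm j$ and $l=1$ then produces the linear recurrences relating the diagonal $b_{i,i}$, the near-diagonal $b_{i,i+1}$, and the generic off-diagonal $b_{i,j}$. The coefficients of these recurrences are polynomials in $i,j$ whose balance is governed by $\beta$: for $\beta$ outside a finite resonant set the system forces $b_{i,j}\equiv0$, giving $[\mathcal{W},\mathcal{W}]=\mathcal{W}$ and family (I); nontrivial solutions force $\beta\in\{-1,1,3\}$, yielding families (IV), (III), (II). At each resonant value I would pin down the finitely many free base coefficients (e.g. $b_{2,2}=9$ for $\beta=3$, $b_{2,2}=2$ for $\beta=-1$, and the cubic $b_{i,i}=i^3-i$ for $\beta=1$) and read off the displayed recursions for $b_{i,i}$, $b_{i,i+1}$ and $b_{i,j}$ by solving the scalar relation one index at a time.

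The hard part will be the consistency of this over-determined system: the scalar relation must hold for \emph{all} triples $(i,j,l)$, whereas the recursions build $b_{i,j}$ only from a one-parameter slice of initial data, so I must check that no two derivations of the same $b_{i,j}$ conflict and that the closed forms in the statement (the quotients of cubic polynomials, together with the auxiliary quantities $a_i$, $a_{-i}$) satisfy every instance of the relation. I expect to handle this by induction on $|i|+|j|$, simultaneously tracking the diagonal ($i=j$), anti-diagonal ($i=-j$) and generic entries and using $j\,b_{j,l}=l\,b_{l,j}$ to reconcile the two routes to a given $(i,j)$; the computations are heavy but routine once the correct resonant $\beta$ and base values are fixed, the genuine difficulty being to prove that the resonances single out \emph{exactly} $\beta\in\{-1,1,3\}$ and no further weights.
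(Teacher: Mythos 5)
Your proposal takes essentially the same route as the paper's own proof: the same ansatz $[d_i,d_j]=(i-j)d_{i+j}+\sum_k\gamma_{ijk}v(k)$ with all other products fixed by (\ref{eq3}), the same basis change absorbing the non-resonant $v$-components of $[d_s,d_0]$, the same $d_0$-identities forcing homogeneity $\gamma_{ijk}=0$ for $k\neq i+j-\alpha$, a scalar relation that is exactly the paper's equation (\ref{eq8}) supplemented by the symmetry $i\gamma_{i,j}=j\gamma_{j,i}$ of (\ref{eq6}), and the same resonance analysis in $\beta$ singling out $\beta\in\{-1,1,3\}$ for the families (IV), (III), (II) (with $\beta\in\{0,2\}$ collapsing back to the split algebra (I)), followed by recursive determination of the $b_{i,j}$ from one normalized parameter. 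The one claim you assert a bit more confidently than the computation at that stage supports is that the $d_0$-rows are always clean --- at $\beta=-1$ the constraint $(1+\beta)\gamma_{j,0}=0$ is vacuous, so eliminating $\gamma_{0,0}$ and $\gamma_{j,0}$ there requires the full system --- but the paper itself dispatches that case only with ``similar arguments,'' so this matches rather than falls short of the published proof.
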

\begin{proof} Let us chose a basis $\{d_i, v(i) \ | \ i\in \mathbb{Z}\}$ such that $\{d_i, | \ i\in \mathbb{Z}\}$ and $\{v(n) \ | \ n\in \mathbb{Z}\}$ are bases of the spaces $\mathcal{W}$ and  $V(\alpha, \beta)$, respectively. From the assumption of theorem we have the products
$$[v(n),d_m]=(\alpha+n+\beta m)v(n+m), \  n\in \mathbb{Z}, \alpha, \beta\in \mathbb{C}, \ \alpha \neq0,$$
$$[\mathcal{W}, V(\alpha, \beta)]=[V(\alpha, \beta), V(\alpha, \beta)]=0.$$

In order to complete the proof of theorem, that is, to clarify the structure of $L$ we have to describe the products $[\mathcal{W},\mathcal{W}]$.

We introduce notations:
$$\begin{array}{ll}
[d_i,d_j]=(i-j)d_{i+j}+\sum\limits_{k}\gamma_{i,j,k}v(k), & i\neq-j,\ \quad i\neq j,\\{}
[d_j,d_{-j}]=2jd_0+\sum\limits_k\gamma_{j,-j,k}v(k), & j\neq0,\\{}
[d_j,d_j]=\sum\limits_k\gamma_{j,j,k}v(k).&
\end{array}$$

Taking the change of basis elements $d_i$ as follows:
$$d^\prime_0=d_0-\sum\limits_{k,k\neq -\alpha}\frac{\gamma_{0,0,k}}{\alpha+k}v(k),
\ \ d^\prime_s=d_s+\sum\limits_{k,k\neq
s-\alpha}\frac{\gamma_{s,0,k}}{s-\alpha-k}v(k)\ \quad s\neq 0.$$
one can assume that
$$\begin{array}{l}
[d_0,d_0]=\gamma_{0,0,-\alpha}v(-\alpha),\\{}
[d_j,d_0]=jd_j+\gamma_{j,0,j-\alpha}v(j-\alpha),\ \ j\neq0.
\end{array}$$

Considering the Leibniz identity for the triples of elements given below we have the following constraints:
\[\begin{array}{llll}
\text{ Leibniz identity }& & \text{ Constraints } &\\[1mm]
\hline \hline\\
{\mathcal L}(d_0,d_0,d_j)=0,\ j\neq 0&\Longrightarrow &
$$\left\{\begin{array}{ll}
\gamma_{j,0,j-\alpha}=-\beta\gamma_{0,0,-\alpha}, & j\neq 0,\\
\gamma_{0,j,k}=0, & k\neq j-\alpha,\\{}
[d_0,d_j]=-jd_j+\gamma_{0,j,j-\alpha}v(j-\alpha), & j\neq0.
\end{array}\right.$$
\\[10mm]
{\mathcal L}(d_j,d_j,d_0)=0,\ j\neq 0&\Longrightarrow &
$$\left\{\begin{array}{ll}
(1+\beta)\gamma_{j,0,j-\alpha}=0, & j\neq0,\\
\gamma_{j,j,k}=0, & k\neq 2j-\alpha,\\{}
[d_j,d_j]=\gamma_{j,j,2j-\alpha}v(2j-\alpha), & j\in\mathbb{Z},
\end{array}\right.$$\\[10mm]
{\mathcal L}(d_j,d_0,d_{-j})=0,\ j\neq 0&\Longrightarrow &
$$\left\{\begin{array}{ll}
\gamma_{j,-j,k}=0, \quad \quad  \quad  \quad  \quad   k\neq -\alpha,\\
(1-\beta)\gamma_{j,0,j-\alpha}-2\gamma_{0,0,-\alpha}=0,& \\{}
[d_j,d_{-j}]=2jd_0+\gamma_{j,-j,-\alpha}v(-\alpha).
\end{array}\right.$$
\end{array}\]

For the sake of convenience we denote
$$\gamma_{0,0}=\gamma_{0,0,-\alpha}, \quad \gamma_{0,j}=\gamma_{0,j,j-\alpha}, \quad \gamma_{j,j}=\gamma_{j,j,2j-\alpha}, \quad \gamma_{j,-j}=\gamma_{j,-j,-\alpha}.$$

%In new notations the table of multiplication of $L$ has the form
%$$\left\{\begin{array}{ll}
%[v(k),d_i]=(k+\alpha+\beta i)v(i+k),&\\[1mm]
%[d_0,d_0]=\gamma_{0,0}v(-\alpha),&\\[1mm]
%[d_0,d_j]=-jd_j+\gamma_{0,j}v(j-\alpha),&\ \ j\neq 0,\\[1mm]
%[d_j,d_0]=jd_j+\gamma_{j,0}v(j-\alpha),&\ \ j\neq 0,\\[1mm]
%[d_j,d_j]=\gamma_{j,j}v(2j-\alpha),& \ \ j\neq 0,\\[1mm]
%[d_j,d_{-j}]=2jd_0+\gamma_{j,-j}v(-\alpha),&\ \ j\neq 0,\\[1mm]
%[d_i,d_j]=(i-j)d_{i+j}+\sum\limits_{k}\gamma_{i,j,k}v(k),&\ \ i\neq \{j,-j,0\},\ \ j\neq 0.%\\[1mm]
%\end{array}\right.$$
%with the following restrictions:
%$$\begin{cases}
%\gamma_{j,0}=-\beta \gamma_{0,0},\\
%(1+b)\gamma_{j,0}=0, \quad j\neq 0\\
%(1-b)\gamma_{j,0}=2\gamma_{0,0}.
%\end{cases}$$

Making the change
$$ d_j'=d_j-\frac{\gamma_{0,j}}{j}v(j-\alpha), \ \ j\neq0,$$
we derive $\gamma_{0,j}=0.$

The equalities ${\mathcal L}(d_i,d_0,d_j)=0$ with $i\neq \{-j, 0, j\}$ imply
$\gamma_{i,j,k}=0,  \ k\neq i+j-\alpha,   \ i\notin \{-j, 0,j\}.$ Therefore, we apply new notations $\gamma_{i,j}=\gamma_{i,j,i+j-\alpha}$.

Considering the Leibniz identity for the triples elements given below we obtain the constraints:
\[\begin{array}{llll}
\text{ Identity }& & \text{ Constraints } &\\[1mm]
\hline \hline\\
\mathcal{L}(d_0,d_j,d_{-j})=0 &\Longrightarrow &
\gamma_{j,-j}+\gamma_{-j,j}=-2\gamma_{0,0},\ j\neq 0
\\[5mm]
\mathcal{L}(d_0,d_i,d_j)=0 &\Longrightarrow &
i\gamma_{i,j}=j\gamma_{j,i},\ \ i\neq \{0,j,-j\},\ j\neq 0\\[5mm]
\mathcal{L}(d_j,d_{-j},d_{j})=0 &\Longrightarrow &
\beta\gamma_{j,-j}+(\beta-2)\gamma_{j,j}=2\beta \gamma_{0,0},\ j\neq 0,\\[5mm]
\mathcal{L}(d_i,d_j,d_k)=0 &\Longrightarrow &
(j-k)\gamma_{i,j+k}=(i-j)\gamma_{i+j,k}+(i+j+\beta k)\gamma_{i,j}-(i-k)\gamma_{i+k,j}-\\[5mm]
&&(i+k+\beta j)\gamma_{i,k}.\end{array}\]

In new notations the table of multiplication of $L$ has the form
$$\left\{\begin{array}{ll}
[v(k),d_i]=(k+\alpha+\beta i)v(i+k),&\\[1mm]
[d_0,d_0]=\gamma_{0,0}v(-\alpha),&\\[1mm]
[d_0,d_j]=-jd_j,&\ \ j\neq 0,\\[1mm]
[d_j,d_0]=jd_j+\gamma_{j,0}v(j-\alpha),&\ \ j\neq 0,\\[1mm]
[d_j,d_j]=\gamma_{j,j}v(2j-\alpha),& \ \ j\neq 0,\\[1mm]
[d_j,d_{-j}]=2jd_0+\gamma_{j,-j}v(-\alpha),&\ \ j\neq 0,\\[1mm]
[d_i,d_j]=(i-j)d_{i+j}+\gamma_{i,j}v(i+j-\alpha),&\ \ i\notin \{-j,0, j\}.\\[1mm]
\end{array}\right. $$
with the following restrictions:
$$\begin{array}{l}
\qquad \gamma_{j,0}+\beta\gamma_{0,0}=(1+\beta)\gamma_{j,0}=(1-\beta)\gamma_{j,0}-2\gamma_{0,0}=0,\ j\neq 0
\end{array}$$
\begin{equation}{\label{eq5}}
\gamma_{j,-j}+\gamma_{-j,j}=-2\gamma_{0,0},\quad j\neq 0,
\end{equation}
\begin{equation}{\label{eq6}}
i\gamma_{i,j}=j\gamma_{j,i},\quad i\neq \{0,j,-j\},\ j\neq 0,
\end{equation}
\begin{equation}{\label{eq7}}
\beta\gamma_{j,-j}+(\beta-2)\gamma_{j,j}=2\beta\gamma_{0,0},\quad j\neq 0,
\end{equation}
\begin{equation}{\label{eq8}}
(j-k)\gamma_{i,j+k}=(i-j)\gamma_{i+j,k}+(i+j+\beta k)\gamma_{i,j}-(i-k)\gamma_{i+k,j}-(i+k+\beta j)\gamma_{i,k}.
\end{equation}

Let us consider possible cases.

\noindent $\bullet$ \textbf{Case 1.} Let $\beta\neq -1$. Then $\gamma_{0,0}=\gamma_{j,0}=0.$ %and we have the following algebra
%$$\left\{\begin{array}{ll}
%[v(k),d_i]=(k+\alpha+\beta i)v(i+k),&\\[1mm]
%[d_0,d_j]=-jd_j,&\ \ j\neq 0,\\[1mm]
%[d_j,d_0]=jd_j&\ \ j\neq 0,\\[1mm]
%[d_j,d_j]=\gamma_{j,j}v(2j-\alpha),& \ \ j\neq 0,\\[1mm]
%[d_j,d_{-j}]=2jd_0+\gamma_{j,-j}v(-\alpha),&\ \ j\neq 0,\\[1mm]
%[d_i,d_j]=(i-j)d_{i+j}+\gamma_{i,j}v(i+j-\alpha),&\ \ i\neq \{j,-j\}.\\[1mm]
%\end{array}\right. $$
%with the restrictions:
%$$\begin{array}{l}
%\gamma_{j,-j}+\gamma_{-j,j}=0,\quad j\neq 0\\
%i\gamma_{i,j}=j\gamma_{j,i},\quad i\neq \{0,j,-j\},\ j\neq 0,\\
%\beta\gamma_{j,-j}+(\beta-2)\gamma_{j,j}=0,\quad j\neq 0,\\
%(j-k)\gamma_{i,j+k}=(i-j)\gamma_{i+j,k}+(i+j+\beta k)\gamma_{i,j}-(i-k)\gamma_{i+k,j}-(i+k+\beta j)\gamma_{i,k}.
%\end{array}$$

\textbf{Case 1.1.} Let $\beta\neq 0$. Taking the following basis transformation
$$d_0'=d_0+\frac{\gamma_{1,1}}{\beta(\beta+1)}v(-\alpha), \, \, d_i'=d_i-\frac{\gamma_{1,1}}{1+\beta}v(i-\alpha),\ \ i\neq0, \ v'(i)=v(i),$$
we may assume $\gamma_{1,1}=0.$ From the above restrictions we derive $\gamma_{1,-1}=\gamma_{-1,1}=0.$

Equality (\ref{eq8}) with the values $i=k=1$ imply
$$(j-1)(\gamma_{1,j+1}+\gamma_{j+1,1})=(j+1+\beta)\gamma_{1,j}.$$

From this equality and (\ref{eq6}) we obtain
\begin{equation}{\label{eq9}}
(i-1)(i+2)\gamma_{i+1,1}=(i+1+\beta)i\gamma_{i,1},  \ i\notin
\{-2,-1,0,1\}.
\end{equation}
Using the induction it is easy to proof the equality
$$\gamma_{i,1}=\frac{(i-1)(3+\beta)(4+\beta)\cdots (i+\beta)}{4\cdot 5\cdots (i+1)}\gamma_{2,1},\quad i\in \mathbb{Z}^{+}\setminus \{1,2\}$$

Equality (\ref{eq8}) with the values $i=k=-1$ imply
$$(j+1)(\gamma_{-1,j-1}+\gamma_{j-1,-1})=(j-1-\beta)\gamma_{-1,j}-(\beta j-2)\gamma_{-1,-1}.$$
From this equality together with (\ref{eq6}) we get
\begin{equation}{\label{eq10}}
(i+1)(i-2)\gamma_{i-1,-1}=(i-1-\beta)i\gamma_{i,-1}-(\beta
i-2)\gamma_{-1,-1},  \ i \notin \{-1,0,1,2\}.
\end{equation}

\textbf{Case 1.1.1.} Let $\beta\neq 1$.

\textbf{Case 1.1.1.1.} Let $\beta\neq 2$. Then equality (\ref{eq7}) with $j=-1$ imply $\gamma_{-1,-1}=0.$

Substituting instead of parameters $\{i,j,k\}$ the following values $$(1,-2,1), (1,-2,-1), (-1,2,-1), (-1,2,1), (1,2,-1), (-1,-2,1)$$ in equality (\ref{eq8}) we obtain
$$\gamma_{1,-2}=\gamma_{1,-3}=\gamma_{-1,2}=\gamma_{-1,3}=(3-\beta)\gamma_{1,2}=(\beta-3)\gamma_{-1,-2}=0.$$%
%
%\begin{center}
%\begin{tabular}{c|c}
%
%    Values $\{i,j,k\}$  & Identity (\ref{eq8})  \\
%    \hline \hline
%    $\{1,-2,1\}$    &  $\gamma_{1,-2}=0$\\
%    \hline
%    $   \{1,-2,-1\}$&  $\gamma_{1,-3}=0$\\
%    \hline
%    $\{-1,2,-1\}$&  $\gamma_{-1,2}=0$\\
%    \hline
%    $\{-1,2,1\}$&  $\gamma_{-1,3}=0$\\
%    \hline
%    $\{1,2,-1\}$&  $(3-\beta)\gamma_{1,2}=0$\\
%    \hline
%    $\{-1,-2,1\}$&  $(\beta-3)\gamma_{-1,-2}=0$\\
%    \hline
%\end{tabular}
%\end{center}

From these equalities and (\ref{eq6}) we get $\gamma_{-2,1}=\gamma_{-3,1}=\gamma_{2,-1}=\gamma_{3,-1}=0.$

We distinguish the possible subcases.

\textbf{(a)} Let $\beta\neq3.$ Then $\gamma_{1,2}=\gamma_{-1,-2}=\gamma_{-2,-1}=\gamma_{2,1}=0$. From the equalities (\ref{eq9}), (\ref{eq10}) and (\ref{eq6}) we derive
$$\gamma_{i,1}=\gamma_{1,i}=\gamma_{-i,-1}= \gamma_{-1,-i}=0, \ i\in\mathbb{Z}^+\setminus\{1,2\}.$$

Equality (\ref{eq8}) with $i=\pm 1, j:=\pm i, k=\mp 1$ together with (\ref{eq6}) imply
$$\gamma_{i,-1}=\gamma_{-1,i}=\gamma_{-i,1}=\gamma_{1,-i}=0, \quad i\in \mathbb{Z}^+\setminus \{1,2\}.$$

Let us write the combining of the above restriction
\begin{equation}\label{eq1111}
  \gamma_{1,i}=\gamma_{i,1}=\gamma_{-1,i}=\gamma_{i,-1}=0,\quad i\in\mathbb{Z}^*.
\end{equation}

Substituting in (\ref{eq8}) $i=1,$ $j:=i, k:=j$ we get
$$(1-i)\gamma_{1+i,j}=(1-j)\gamma_{1+j,i}, \quad i,j\in \mathbb{Z}\setminus\{-1,1\}.$$
In this equality putting $j=-2$ we obtain $(1-i)\gamma_{1+i,-2}=3\gamma_{-1,i}=0$.

Therefore,  $\gamma_{i,-2}=\gamma_{-2,i}=0$ for $i\in \mathbb{Z}\setminus\{-2,0,2\}.$

Similarly, for $j\leq -3$ we deduce $\gamma_{i,j}=\gamma_{j,i}=0$ with $i\in \mathbb{Z}^*.$ So, we have
$\gamma_{i,j}=\gamma_{j,i}=0$ with $i\in \mathbb{Z}^*$ and $j\in \mathbb{Z}^{-}$.

From equality (\ref{eq8}) we have
\begin{equation}\label{eq88}
(i+(1-\beta)j)\gamma_{i,j}=(i+j)\gamma_{i-j,j}, \ i\neq j, i,j\in\mathbb{Z}^{+}.
\end{equation}

 If $i+(1-\beta)j\neq 0$, then from (\ref{eq88}) with $i=2, \ j\geq3$ we obtain $\gamma_{2,j}=\gamma_{j,2}=0,$ $j\in\mathbb{Z}^{+}\setminus\{1,2\}$ and repeating this consideration for $i\geq3$, we deduce
$$\gamma_{i,j}=\gamma_{j,i}=0, \ i\notin\{j,-j\}, \ i,j\in\mathbb{Z}^*.$$

If $i+(1-\beta)j=0$, then equality (\ref{eq88}) implies
$$\gamma_{i,j}=\gamma_{j,i}=0, i\notin\{j,-j\}, \ i,j\in\mathbb{Z}^*.$$

Taking into account the above equation and equality (\ref{eq8}) with $j:=i, \ k:=j$ and $i\notin\{-j\}$ we get
$\gamma_{i,i}=0$. Finally, from (\ref{eq7}) we have $\gamma_{i,-i}=0, \  i\in\mathbb{Z}^*$ and the first algebra of the list of theorem is obtained.

\

\textbf{(b)} Let $\beta=3.$ Then we have
$$\gamma_{1,-2}=\gamma_{-2,1}=\gamma_{-1,2}=\gamma_{2,-1}=\gamma_{1,-3}=\gamma_{-3,1}=\gamma_{-1,3}=\gamma_{3,-1}=
\gamma_{1,1}=\gamma_{1,-1}=\gamma_{-1,1}=0.$$
Note that from (\ref{eq7}) we get $\gamma_{j,j}=-3\gamma_{j,-j}.$

Substituting $\beta=3$ in equalities (\ref{eq9}) and (\ref{eq10}) we derive
\begin{equation}{\label{eq11}}
(i-1)(i+2)\gamma_{i+1,1}=(i+4)i\gamma_{i,1}, \quad i\in \mathbb{Z}\setminus \{-1,0,1,2\},
\end{equation}
\begin{equation}{\label{eq12}}
(1+i)(i-2)\gamma_{i-1,-1}=(i-4)i\gamma_{i,-1}, \quad i\in \mathbb{Z}\setminus \{-1,0,1,2\}.
\end{equation}

Applying induction in equality (\ref{eq11}) one can prove
\begin{equation}\label{eq222}
  \gamma_{i,1}=\frac{(i-1)(i+2)(i+3)}{20}\gamma_{2,1}, \ \ i\geq 3.
\end{equation}

Similarly, using (\ref{eq12}) it is easy to prove that
\begin{equation}\label{eq2222}
  \gamma_{i,-1}=-\frac{(i+1)(i-2)(i-3)}{20}\gamma_{-2,-1}, \ \ i\leq -3.
\end{equation}

Substituting instead of parameters $\{i,j,k\}$ the following values $(1,i,-1), \ (-1,-i,1)$ in equality (\ref{eq8}) we obtain
$$(1-i)\gamma_{i+1,-1}=(1+i)\gamma_{1,i-1}-(i-2)\gamma_{1,i}, \ i\in \mathbb{Z}\setminus \{1,2\},$$
$$(i-1)\gamma_{-i-1,1}=-(1+i)\gamma_{-1,-i+1}+(i-2)\gamma_{-1,-i}, \ i\in \mathbb{Z}\setminus \{1,2\}.$$

Now substitute (\ref{eq222}) in the first of the above equations to deduce
$$\gamma_{i,-1}=\frac{(i+1)(i-2)(i-3)}{20}\gamma_{2,1}, \ \ i\in \mathbb{Z}^+\setminus\{1,2\}.$$

Similarly, applying (\ref{eq2222}) in the second of the above equalities we get
$$\gamma_{-i,1}=\frac{(i+1)(i-2)(i-3)}{20}\gamma_{-2,-1}, \ \ i\in \mathbb{Z}^+\setminus\{1,2\}.$$

Taking into account that $\gamma_{2,-2}=-\gamma_{-2,2}$ in the equalities $\gamma_{2,2}=-3\gamma_{2,-2} \quad \gamma_{-2,-2}=-3\gamma_{-2,2}$ we obtain $\gamma_{2,2}+\gamma_{-2,-2}=0.$ Considering equality (\ref{eq8}) with values $(i,j,k)$ equal to $(2,2-1)$ and $(-2,-2,1)$ lead to
$\gamma_{2,2}=9\gamma_{2,1}, \ \gamma_{-2,-2}=9\gamma_{-2,-1}.$

Consequently, $\gamma_{2,1}=-\gamma_{-2,-1}$ and
$$\gamma_{i,1}=a_i\gamma_{2,1}, \quad \gamma_{i,-1}=-a_{-i}\gamma_{2,1}, \quad i\in\mathbb{Z}\setminus\{-1,0,1\},$$
where
$$a_i=\frac{(i-1)(i+2)(i+3)}{20}, \ \ a_{-i}=-\frac{(i+1)(i-2)(i-3)}{20}.$$

If in equality (\ref{eq8}) we take $(i,j,k)=(i,i,\mp 1)$, then we obtain

\begin{equation}{\label{eq13}} (i+1)(\gamma_{i,i-1}+\gamma_{i-1,i})=(2i-3)\gamma_{i,i}-(4i-1)\gamma_{i,-1}, \, i\notin\{-1,0,1\},
\end{equation}
\begin{equation}{\label{eq14}} (i-1)(\gamma_{i,i+1}+\gamma_{i+1,i})=(2i+3)\gamma_{i,i}-(4i+1)\gamma_{i,1}, \, i\notin\{-1,0,1\}
\end{equation}

Putting $i=2$ and $i=3$ in (\ref{eq14}) and (\ref{eq13}), respectively, we derive
$$\gamma_{2,3}+\gamma_{3,2}=54\gamma_{2,1}, \quad 4(\gamma_{2,3}+\gamma_{3,2})=3\gamma_{3,3}
\quad \Longrightarrow \gamma_{3,3}=72\gamma_{2,1}.$$

For $ i\notin \{-2,-1,0,1,2\}$ we set $\gamma_{i,i}=b_{i,i}\gamma_{2,1}.$ Clearly, $b_{2,2}=9$ and $ b_{3,3}=72.$

 If we replace the parameter $i$ in (\ref{eq14}) to $i-1$
 and multiply both sides of equality (\ref{eq13}) by $i-2,$ then we derive
$$\left\{\begin{array}{l}
(i-2)(\gamma_{i-1,i}+\gamma_{i,i-1})=(2i+1)\gamma_{i-1,i-1}-(4i-3)\gamma_{i-1,1},\\
(i+1)(i-2)(\gamma_{i-1,i}+\gamma_{i,i-1})=(i-2)(2i-3)\gamma_{i,i}-(i-2)(4i-1)\gamma_{i,-1}.
\end{array}\right.$$
From which we have
$$(i+1)((2i+1)\gamma_{i-1,i-1}-(4i-3)\gamma_{i-1,1})=(i-2)(2i-3)\gamma_{i,i}-(i-2)(4i-1)\gamma_{i,-1}.$$

Taking into account that $\gamma_{i-1,i-1}=b_{i-1,i-1}\gamma_{2,1}, \gamma_{i-1,1}=a_{i-1}\gamma_{2,1}, \gamma_{i,-1}=-a_{-i}\gamma_{2,1},$
we get $$\gamma_{i,i}=\frac{1}{(i-2)(2i-3)}((i+1)(2i+1)b_{i-1,i-1}-(i+1)(4i-3)a_{i-1}-(4i-1)(i-2)a_{-i})\gamma_{2,1}.$$

Thus, we obtain the following recursive relations:
$$\begin{array}{ll}
b_{2,2}=9, \ b_{-2,-2}=-9, & \\
b_{i,i}=\displaystyle\frac{(i+1)(2i+1)b_{i-1,i-1}-(i+1)(4i-3)a_{i-1}-(4i-1)(i-2)a_{-i}}{(i-2)(2i-3)}, & i\neq\{-2,-1,0,1,2\}.\\
\end{array}$$

Since $i\gamma_{i,j}=j\gamma_{j,i},$ in order to clarify the parameters $\gamma_{i,j}$ it is enough to discover $\gamma_{i,i+j}.$ First, we find parameters $\gamma_{i,i+1}.$ The analysis of (\ref{eq6}) and (\ref{eq14}) lead to
$$\gamma_{i,i+1}=b_{i,i+1}\gamma_{2,1}, \ \ i\neq\{-1,0,1\},$$
where $b_{i,i+1}=\displaystyle\frac{i+1}{(i-1)(2i+1)}((2i+3)b_{i,i}-(4i+1)a_i).$

Considering (\ref{eq8}) for $i=1, \ j:=i$ and $k:=i+1$ with $i\neq\{-1,0,1\}$ we have
$$\gamma_{i,i+2}=b_{i,i+2}\gamma_{2,1}, \quad i\neq\{-1,0,1\},$$
where $b_{i,i+2}=\displaystyle\frac{i+2}{i^2}((i-1)b_{i+1,i+1}+2(2i+1)(i+1)a_{i+1}-(2i+1)a_{2i+1}-i(4i+4)a_i).$

Applying similar arguments we obtain
$$\gamma_{i,j}=b_{i,j}\gamma_{2,1},$$
where $$\begin{array}{lll}
b_{i,j}&=&\displaystyle\frac{j}{i(j-2)}((i-1)b_{i+1,j-1}+(3i+j)(j-1)a_{j-1}-\\
&-&(j-i-1)(j+i-1)a_{j+i-1}-i(i+3j-2)a_i).
\end{array}$$

Finally, from (\ref{eq7}) we get $\gamma_{i,-i}=-\frac{1}{3}b_{i,i}\gamma_{2,1},$

If $\gamma_{2,1}=0,$ then we have the algebra $(I)$ with $\beta=3.$

If $\gamma_{2,1}\neq 0,$ using the scale of basis we may assume $\gamma_{2,1}=1$. Thus, the second algebra of the list of theorem is obtained.

%\newpage

\textbf{Case 1.1.1.2.} Let $\beta=2$. Then, due to (\ref{eq7}) we get $\gamma_{-j,j}=\gamma_{j,-j}=0.$ Note that $\gamma_{1,1}=\gamma_{-1,1}=\gamma_{1,-1}=0.$ Consequently, the table of multiplications of the algebra has the following form:
$$\left\{\begin{array}{ll}
[v(k),d_i]=(k+\alpha+\beta i)v(i+k),&\\[1mm]
[d_0,d_j]=-jd_j,&\ \ j\neq 0,\\[1mm]
[d_j,d_0]=jd_j,&\ \ j\neq 0,\\[1mm]
[d_j,d_j]=\gamma_{j,j}v(2j-\alpha),& \ \ j\notin\{0,1\},\\[1mm]
[d_j,d_{-j}]=2jd_0,&\ \ j\neq 0,\\[1mm]
[d_i,d_j]=(i-j)d_{i+j}+\gamma_{i,j}v(i+j-\alpha),&\ \ i\neq j,\ \ i\neq -j.\\[1mm]
\end{array}\right.$$

Substituting instead of parameters $\{i,j,k\}$ the following values $$(i,i,-2i),
(1,2,1), (1,-2,1), (1,-2,-1), (-1,2,1), (1,2,-1)$$ in equality (\ref{eq8}) we obtain restriction on structure constants of the algebra
\begin{equation}{\label{eq15}}
    \begin{array}{ll}
        2\gamma_{i,i}=-\gamma_{i,-2i}, \quad i\neq 0, \quad \gamma_{1,3}=-\gamma_{3,1}+5\gamma_{1,2}, \quad \gamma_{1,-2}=0, \\
        3\gamma_{-1,-1}=-\gamma_{1,-3}, \quad \gamma_{-1,3}=3\gamma_{-1,2}, \quad \gamma_{3,-1}=\gamma_{1,2}.
    \end{array}
\end{equation}

Taking into account (\ref{eq15}) and equation (\ref{eq8}) with $j:=i, k:=j$ we get
$$-(i+j)\gamma_{i,-2i}=(i-j)(\gamma_{i+j,i}+\gamma_{i,i+j})+(j+3i)\gamma_{i,j}.$$
This equality with $i=1$ has the form
$$-(1+j)\gamma_{1,-2}=(1-j)(\gamma_{1+j,1}+\gamma_{1,1+j})+(j+3)\gamma_{1,j}.$$
Due to (\ref{eq6}) the above equality can be written as follows
\begin{equation}{\label{eq16}}
    (1-j)(2+j)\gamma_{1+j,1}+j(j+3)\gamma_{j,1}=-(1+j)\gamma_{1,-2}.
\end{equation}

Equality (\ref{eq16}) imply
$$\begin{array}{lll}
4\gamma_{3,1}=5\gamma_{1,2},&\gamma_{-2,1}=0,&\gamma_{-1,3}=-3\gamma_{1,2}
\end{array}
$$

Considering (\ref{eq8}) with $(i, j, k)=(-1,3, -1)$ and (\ref{eq6}) we conclude
$$\gamma_{-1,1}=-\gamma_{-1,2}.$$
Thanks to (\ref{eq15}) we have $\gamma_{-1,-1}=\gamma_{-1,2}=0.$

From equality (\ref{eq8}) with $j:=i, \ k:=j$ and $i\notin\{-j\}$ we obtain
$$(i-j)(\gamma_{i,i+j}+\gamma_{i+j,i})=2(i+j)\gamma_{i,i}-(3i+j)\gamma_{i,j}.$$

Setting $i=-1$ in the above equality and using (\ref{eq6}), we obtain
$$(j+1)(j-2)\gamma_{j-1,-1}=(j-3)\gamma_{-1,j},$$
from which we derive $\gamma_{i,-1}=0$ with $i\in \mathbb{Z}^+\setminus \{1,2\}.$

Equality (\ref{eq16}) for $j\leq -4$ deduces $\gamma_{i,1}=\gamma_{1,i}=0, \ i\in\mathbb{Z^{-}}.$

Substituting in equality (\ref{eq8}) the values of parameters $(i,j,k)$ as follows: $$(1,i,-1), (-1,i,1),  (1,i,j),$$
we derive relations:
\begin{equation}{\label{eq17}}
    \gamma_{1,i}=(1+i)\gamma_{1,i-1}, \, i\in \mathbb{Z}^+\setminus\{1,2\},
\end{equation}
\begin{equation}{\label{eq18}}
    (i+1)\gamma_{-1,i}=(i-1)\gamma_{-1,i+1}, \, i\in \mathbb{Z}^-\setminus\{-1\},
\end{equation}
\begin{equation}{\label{eq19}}
    (1-i)\gamma_{1+i,j}=(1-j)\gamma_{1+j,i}, \, i\in \mathbb{Z}^-\setminus\{-1,1\}.
\end{equation}

Taking $i\geq 3$ in (\ref{eq17}), $i\leq -2$ in (\ref{eq18}) and $j=-2$ in (\ref{eq19}),
we obtain
$$\gamma_{1,i}=\gamma_{i,1}=0, \ i\in \mathbb{Z}, \quad \gamma_{-1,i}=\gamma_{i,-1}=0, \ i\in \mathbb{Z}, \quad \gamma_{i,-2}=\gamma_{-2,i}=0, \ i\in \mathbb{Z}\setminus\{0,2\}.$$

Applying similar arguments for $j=-3,-4,\dots$ we get
$$\gamma_{i,j}=\gamma_{j,i}=0, \ i\neq j, \ i\in \mathbb{Z}^*, \ j\in \mathbb{Z}^-.$$
Thanks to the first equality of (\ref{eq15}) we have $\gamma_{i,i}=0, i\in \mathbb{Z}.$

From  equality (\ref{eq8}) with $k:=-j$ and $i\neq j$
we derive
$$(i-j)\gamma_{i,j}=(i+j)\gamma_{i-j,j}, \ \ \ i\neq j, \ \ i,j\in \mathbb{Z}^{+}.$$

Considering in this equality various values of parameters $i$ and $j$ we deduce
$$\gamma_{i,j}=\gamma_{j,i}=0, \ \ i\neq j,-j,\ \ i,j\in\mathbb{Z}^*.$$ Thus, we obtain the algebra $(I)$ with $\beta=2.$

\

\textbf{Case 1.1.1.} Let $\beta=1$. Then from (\ref{eq5}) and (\ref{eq7}) we have $\gamma_{1,1}=\gamma_{-1,1}=\gamma_{1,-1}=\gamma_{-1,-1}=0$ and the following family of algebras:
$$\left\{\begin{array}{ll}
[v(k),d_i]=(k+\alpha+i)v(i+k),&\\[1mm]
[d_0,d_j]=-jd_j,&\ \ j\neq 0,\\[1mm]
[d_j,d_0]=jd_j,&\ \ j\neq 0,\\[1mm]
[d_j,d_j]=\gamma_{j,j}v(2j-\alpha),& \ \ j\neq 0,\\[1mm]
[d_j,d_{-j}]=2jd_0+\gamma_{j,j}v(-\alpha),&\ \ j\neq 0,\\[1mm]
[d_i,d_j]=(i-j)d_{i+j}+\gamma_{i,j}v(i+j-\alpha),&\ \ i\neq j,\ \ i\neq -j.\\[1mm]
\end{array}\right.$$

Substituting in  equality (\ref{eq8}) the values of parameters $(i,j,k)$ as follows:
$$(-1,2,1), (1,-2,-1), (1,2,-1), (-1,-2,1), (2,2,-1),$$
$$(-2,-2,1), (1,2,-2), (2,-2,1), (-1,-2,2), (2,-2,-1),$$
we get the following relations between structure constants:
$$\begin{array}{lll}
\gamma_{-3,1}=4\gamma_{-2,-1},& \gamma_{1,-3}=-12\gamma_{-2,-1}, & \gamma_{3,-1}=4\gamma_{2,1},\\
\gamma_{-1,3}=-12\gamma_{2,1},& \gamma_{1,-2}=-6\gamma_{-2,-1}, & \gamma_{-2,1}=3\gamma_{-2,-1},\\
\gamma_{-1,2}=-6\gamma_{2,1},& \gamma_{2,-1}=3\gamma_{2,1},&\gamma_{2,2}=6\gamma_{2,1},\\
\gamma_{-2,-2}=6\gamma_{-2,-1},&\gamma_{3,-2}=20\gamma_{2,1}+6\gamma_{-2,-1},&\gamma_{-2,3}=-30\gamma_{2,1}-9\gamma_{-2,-1},\\
\gamma_{2,-2}=12\gamma_{2,1}+6\gamma_{-2,-1},&\gamma_{-2,2}=-12\gamma_{2,1}-6\gamma_{-2,-1},&\gamma_{-3,2}=20\gamma_{-2,-1}+6\gamma_{2,1},\\ \gamma_{2,-3}=-30\gamma_{-2,-1}-9\gamma_{2,1},&\gamma_{2,1}=-\gamma_{-2,-1}.
\end{array}$$

Note that from (\ref{eq7}) with $j=2$ we have $\gamma_{2,-2}=\gamma_{2,2}.$

From the identity (\ref{eq9}) we obtain
$$\begin{array}{ll}
\gamma_{i,1}=(i-1)\gamma_{2,1},& i\in\mathbb{Z}^+\setminus\{1,2\}, \\
\gamma_{-i,-1}=(i-1)\gamma_{-2,-1}=-(i-1)\gamma_{2,1},& i\in\mathbb{Z}^+\setminus\{1,2\}.
\end{array}$$

Considering (\ref{eq8}) for $i=\pm1, \ j:=\pm i, \ k:=\mp1, j$ and applying (\ref{eq6}) we conclude
$$\gamma_{i,-1}=(i+1)\gamma_{2,1}, \  i\in\mathbb{Z}^+\setminus\{1\}, \quad
\gamma_{-i,1}=-(i+1)\gamma_{2,1}, \ i\in\mathbb{Z}^+\setminus\{1\},$$
$$(1-i)\gamma_{1+i,j}=(1-j)\gamma_{1+j,i}-(i-j)(i+j-1)\gamma_{2,1}.$$

Analyzing these obtained equalities for $j\leq -2,$ we derive
$$\gamma_{i,j}=j(ij-1)\gamma_{2,1}, \ i\neq \pm j, \ i,j\in\mathbb{Z}\setminus\{-1,0,1\}$$

Now  equality (\ref{eq8}) with $j:=i$ and $k:=j$ implies
$$\gamma_{i,i}=(i^3-i)\gamma_{2,1}, \ \ i\in\mathbb{Z}\setminus\{-1,0,1\}.$$

In addition, from (\ref{eq7}) we get $\gamma_{i,-i}=(i^3-i)\gamma_{2,1}, $ $ i\in\mathbb{Z}\setminus\{-1,0,1\}.$

If $\gamma_{2,1}\neq0,$ then by rescaling the basis we can assume $\gamma_{2,1}=1$ and the third algebra of the list of theorem is obtained.

If $\gamma_{2,1}=0$, we have the algebra $(I)$ with $\beta=1.$

\textbf{Case 1.2.} Let $\beta=0$ be. Then (\ref{eq7}) has the form
$\gamma_{j,j}=0, \ j\neq0$ and the have the algebra has the following table of multiplications:
$$\left\{\begin{array}{ll}
[v(k),d_i]=(k+\alpha)v(i+k),&\\[1mm]
[d_0,d_j]=-jd_j,&\ \ j\neq 0,\\[1mm]
[d_j,d_0]=jd_j,&\ \ j\neq 0,\\[1mm]
[d_j,d_j]=0,\\[1mm]
[d_j,d_{-j}]=2jd_0+\gamma_{j,-j}v(-\alpha),&\ \ j\neq 0,\\[1mm]
[d_i,d_j]=(i-j)d_{i+j}+\gamma_{i,j}v(i+j-\alpha),&\ \ i\neq j,\ \ i\neq -j.\\[1mm]
\end{array}\right.$$

Taking the change of $d_0$ as follows $d_{0}'=d_0+\frac{1}{2}\gamma_{1,-1}v(-\alpha)$ we may suppose $\gamma_{1,-1}=0$. Then from (\ref{eq5}) we get $\gamma_{-1,1}=0.$

Substituting in equality (\ref{eq8}) the values of parameters $(i,j,k)$ as $(\pm1,\pm2,\pm1)$
we get the following relations between structure constants:
$$\gamma_{3,-1}=3\gamma_{1,2}, \quad \gamma_{-3,1}=3\gamma_{-1,-2}, \quad \gamma_{-1,2}=\gamma_{-1,3},
\quad \gamma_{1,-2}=2\gamma_{1,-3},$$
$$\gamma_{1,-2}=\gamma_{-1,2}=0, \quad \gamma_{1,3}=-\gamma_{3,1}, \quad \gamma_{-1,-3}=-\gamma_{-3,-1}. $$

Thanks to (\ref{eq6}) the above relations can be written as
$$\gamma_{\pm1,\pm2}=\gamma_{\pm2,\pm1}=\gamma_{\pm1,\pm3}= \gamma_{\pm3,\pm1}=0.$$

Equality (\ref{eq8}) with $i=\mp1, \ j:=i, \ k=\mp1$ with $i\notin\{0,1,-1\}$ together with (\ref{eq6}) lead to
$$(1+i)(i-2)\gamma_{i-1,-1}=(i-1)i\gamma_{i,-1}, \quad (i+2)(i-1)\gamma_{i+1,1}=(i+1)i\gamma_{i,1},$$
from which we obtain
$$\gamma_{-1,i}=\gamma_{i,-1}=0, \quad \gamma_{1,i}=\gamma_{i,1}=0,  \quad i\in \mathbb{Z}\setminus\{-1,0,1\}.$$

Again we consider  equality (\ref{eq8}) with $i=1, \ j:=i, \ k:=j, \ i\neq \pm j, \ i,j\in \mathbb{Z}\setminus\{-1,0,1\}$
and with $j:=1-i, k:=-1, \ i\in \mathbb{Z}^+\setminus\{1,2\},$ we derive
$$(1-i)\gamma_{1+i,j}=(1-j)\gamma_{1+j,i}, \ i\notin\{j,-j\}, \ i,j\in \mathbb{Z}\setminus\{-1,0,1\},$$
$$(2-i)\gamma_{i,-i}=-(i+1)\gamma_{i-1,1-i}, \ i\in \mathbb{Z}^+\setminus\{1,2\}.$$
The simple analysis of these relations bring us to
$$\gamma_{i,j}=0, i,j\in \mathbb{Z}$$ and hence, we obtain the algebra $(I)$ with $\beta=0.$

{\bf Case 2}. Let $\beta= -1$. This case is carried out in a similar way as {\bf Case 1} and we obtain the rest pf the algebras from the list of the theorem. \end{proof}

\begin{rem} From Theorem \ref{thm22} we conclude that in the infinite-dimensional case, unlike to the finite-dimensional case there are many simple Leibniz algebras whose corresponding Lie algebra is a simple Lie algebra and the ideal $\mathcal{I}$ is an irreducible right module over the simple Lie algebra.
\end{rem}

\section{Annihilation of low-dimensional Leibniz cohomology groups of the Witt algebra.}

In this section we prove the coincidence of  the second Lie and Leibniz  cohomologies for Witt algebra. We start with the table of multiplication  of the Witt algebra $\mathcal{W}$. Let us consider a basis $\{d_i, \, i\in \mathbb{Z}\}$ of $\mathcal{W}$ in which the table of multiplications of $\mathcal{W}$ has the form (\ref{equation1}).

The following theorem is true.

\begin{teo} \label{thm33} $HL^i(\mathcal{W},\mathcal{W})=0, \ \ i=1,2.$
\end{teo}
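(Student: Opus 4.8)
The plan is to compute the first and second Leibniz cohomology groups of $\mathcal{W}$ with coefficients in itself directly, using the explicit basis $\{d_i\}$ and the standard weight-space decomposition that the operator $\ad_{d_0}$ induces. The key structural fact is that $[d_0, d_i] = -i\,d_i$, so $d_i$ is an eigenvector for the adjoint action of $d_0$ with eigenvalue $-i$; this grades $\mathcal{W}$ by $\mathbb{Z}$. Any cochain (a derivation $d$ for $HL^1$, or a $2$-cocycle $\varphi$ for $HL^2$) can be decomposed into homogeneous components with respect to this weight grading, and by an averaging/Fourier argument I may analyze each weight separately. Crucially, because $d_0$ acts semisimply with these integer eigenvalues, most cohomology lives in the zero-weight component, and nonzero-weight cocycles will turn out to be coboundaries.

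First I would treat $HL^1(\mathcal{W},\mathcal{W})=0$. A Leibniz $1$-cocycle is a derivation $D\colon \mathcal{W}\to\mathcal{W}$ satisfying $D[x,y]=[Dx,y]+[x,Dy]$; for $\mathcal{W}$ a Lie algebra the Leibniz $1$-cocycles coincide with ordinary derivations. Writing $D(d_i)=\sum_k c_{ik}d_k$, I decompose $D=\sum_s D_s$ into weight-$s$ pieces where $D_s(d_i)\in \mathbb{C}\,d_{i+s}$. The cocycle condition applied to $[d_0,d_i]=-i\,d_i$ forces each homogeneous piece $D_s$ to be determined by a single scalar, and then the condition on a generic bracket $[d_i,d_j]=(i-j)d_{i+j}$ pins these scalars down. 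The outcome should be that every derivation is inner, i.e. of the form $\ad_x$ for some $x\in\mathcal{W}$; this is a well-known fact for the Witt algebra, and I would reprove it in the present basis. This establishes $HL^1=0$ and also serves as a warm-up for the coboundary bookkeeping needed in degree two.

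The main work is $HL^2(\mathcal{W},\mathcal{W})=0$, and here I would exploit the theorem's own strategy: the section title announces the \emph{coincidence of the second Lie and Leibniz cohomologies}. The plan is therefore to reduce the Leibniz computation to the Lie (Chevalley--Eilenberg) computation together with a symmetric correction. A Leibniz $2$-cocycle $\varphi$ splits canonically into its antisymmetric part $\varphi^- $ and its symmetric part $\varphi^+$; the antisymmetric part is governed by the Lie $2$-cocycle condition, and it is classical that $H^2_{\mathrm{Lie}}(\mathcal{W},\mathcal{W})=0$. For the symmetric part I would again use the weight decomposition: write $\varphi(d_i,d_j)=\sum_k \psi_{ij}^{\,k}\,d_k$, split into homogeneous components of weight $s=i+j-k$, and feed the triples $(d_0,d_i,d_j)$ into the cocycle identity $\Phi(\varphi)(x,y,z)=0$ from (\ref{eq4}). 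As in the proofs of Theorems \ref{thm11} and \ref{thm22}, the identity $\mathcal{L}$-type relations reduce each weight component to finitely many free scalars, and the remaining identities (from triples $(d_i,d_j,d_k)$) give recursions that are killed by subtracting a suitable coboundary $f(x,y)=[d(x),y]+[x,d(y)]-d([x,y])$. The explicit recursion-and-coboundary manipulations already carried out in the proof of Theorem \ref{thm22} provide exactly the template for this elimination.

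\textbf{The hard part} will be controlling the symmetric/nonantisymmetric part of $\varphi$ in the nonzero weights and verifying that the recursions admit a coboundary solution uniformly in $s$, rather than leaving an unbounded family of exceptional low-weight scalars (the indices $i\in\{-1,0,1\}$ consistently behave specially, exactly as in Theorem \ref{thm22}). I expect the genuine obstacle to be the bookkeeping at these small indices, where the generic recursion coefficients degenerate and one must check the finitely many exceptional relations by hand to confirm no new cohomology classes survive. The cleanest route is to fix the coboundary representative so that $\varphi$ vanishes on all brackets involving $d_0$, reducing to a cocycle supported away from weight zero, and then run the weight-by-weight recursion to show it is forced to be zero; assembling these, together with the vanishing Lie part, yields $HL^2(\mathcal{W},\mathcal{W})=0$.
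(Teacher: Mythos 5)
Your treatment of $HL^1$ is essentially fine (you reprove that all derivations of $\mathcal{W}$ are inner via the weight decomposition with respect to $\mathrm{ad}\,d_0$, a standard computation; the paper instead just quotes $HL^1(\mathcal{G},\mathcal{G})=H^1(\mathcal{G},\mathcal{G})$ from \cite{Loday2} and $H^1(\mathcal{W},\mathcal{W})=0$ from \cite{Ecker}). The genuine gap is in your $HL^2$ plan, specifically in how you propose to dispose of the symmetric part $\varphi^+$. With coefficients in the adjoint module of a \emph{Lie} algebra, every Leibniz $2$-coboundary is automatically antisymmetric: $f(x,y)=[g(x),y]+[x,g(y)]-g([x,y])$ satisfies $f(y,x)=-f(x,y)$ because the bracket of $\mathcal{W}$ anticommutes. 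Hence subtracting a coboundary changes only the antisymmetric part of $\varphi$ and can never modify $\varphi^+$; your central step --- running the weight-by-weight recursions for the symmetric part and ``killing them by subtracting a suitable coboundary,'' and the hard part you anticipate about solving this uniformly in the weight $s$ --- is unexecutable as stated. There is a second, related gap: the operator $\Phi$ of (\ref{eq4}) does not respect the decomposition $\varphi=\varphi^++\varphi^-$, so $\Phi(\varphi)=0$ does not by itself imply that $\varphi^-$ satisfies the Lie cocycle condition; your claim that ``the antisymmetric part is governed by the Lie $2$-cocycle condition'' needs an argument.

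What actually happens --- and what the paper's short proof establishes --- is stronger than your plan assumes: the Leibniz cocycle identity forces $\varphi^+=0$ identically, with no coboundary adjustment and no recursion. Evaluating $\Phi(\varphi)$ on triples with repeated arguments and using $[d_i,d_i]=0$ together with anticommutativity of the bracket, one gets for instance $\Phi(\varphi)(d_i,d_i,d_j)=-[\varphi(d_i,d_i),d_j]+(i-j)\bigl(\varphi(d_i,d_{i+j})+\varphi(d_{i+j},d_i)\bigr)$; combining the triples $(d_0,d_0,d_0)$, $(d_i,d_0,d_0)$, $(d_0,d_i,d_i)$, $(d_i,d_i,d_0)$, $(d_i,d_i,d_j)$ yields $\varphi(d_i,d_i)=0$ and $\varphi(d_i,d_j)=-\varphi(d_j,d_i)$ for all $i,j$. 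Thus every Leibniz $2$-cocycle is alternating, so $ZL^2(\mathcal{W},\mathcal{W})=Z^2(\mathcal{W},\mathcal{W})$ and, by the antisymmetry of coboundaries noted above, $BL^2(\mathcal{W},\mathcal{W})=B^2(\mathcal{W},\mathcal{W})$; therefore $HL^2(\mathcal{W},\mathcal{W})=H^2(\mathcal{W},\mathcal{W})$, which vanishes by \cite{fialowski} --- the very citation your ``classical'' step would need in any case, since the vanishing of $H^2(\mathcal{W},\mathcal{W})$ is a nontrivial result rather than folklore. If you repair your step by proving antisymmetry of cocycles directly from the repeated-argument triples, the rest of your outline (weight gradings, recursions, small-index bookkeeping modeled on Theorem \ref{thm22}) becomes unnecessary and the argument collapses to the paper's proof.
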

\begin{proof}
Taking into account that for any Lie algebra $\mathcal{G}$ we have $HL^1(\mathcal{G},\mathcal{G})=H^1(\mathcal{G},\mathcal{G})$ (see  \cite{Loday2}) and the result $H^1(\mathcal{W},\mathcal{W})=0$ of paper \cite{Ecker}, we conclude $HL^1(\mathcal{W},\mathcal{W})=0$.

Let $\varphi(d_i,d_j)=\sum\limits_k\alpha_{i,j}^kd_k$ be a Leibniz $2$-cocycle.

From the equalities
$$\Phi(\varphi)(d_0,d_0,d_0)=\Phi(\varphi)(d_i,d_0,d_0)=\Phi(\varphi)(d_0,d_i,d_i)=\Phi(\varphi)(d_i,d_i,d_0)=
\Phi(\varphi)(d_i,d_i,d_j)=0$$
we obtain
$$\varphi(d_i,d_i)=0 , \ i\in \mathbb{Z}, \quad \varphi(d_i,d_j)=-\varphi(d_j,d_i), \,\, i\neq j.$$

Hence, $\varphi$ is a Lie $2$-cocycle, that is, $ZL^2(\mathcal{W},\mathcal{W})=Z^2(\mathcal{W},\mathcal{W}).$

For an arbitrary Leibniz $2$-coboundary $f$ we have have the existence of a linear map $g$.
Since
$$f(d_i,d_j)=[g(d_i),d_j] + [d_i,g(d_j)] - g([d_i,d_j])=-[d_j,f(d_i)]-[g(d_j),d_i]+g([d_j,d_i])=-f(d_j,d_i),$$
we conclude $BL^2(\mathcal{W},\mathcal{W})=B^2(\mathcal{W},\mathcal{W}).$ Therefore, $$HL^2(\mathcal{W},\mathcal{W})=H^2(\mathcal{W},\mathcal{W}).$$
Due to \cite{fialowski} we have $H^2(\mathcal{W},\mathcal{W})$. Hence, $HL^2(\mathcal{W},\mathcal{W})=0$.
\end{proof}

\end{document}